\documentclass[reqno]{amsart}

\usepackage{amsthm, amsmath, amsfonts}
\usepackage{thmtools}
\usepackage{hyperref}
\usepackage{amssymb}
\usepackage{amscd}
\usepackage{mathrsfs}
\usepackage{geometry}
\usepackage{tikz-cd}
\usepackage{mathbbol}

\usepackage[nameinlink]{cleveref}

\theoremstyle{plain}
\newtheorem{theorem}{Theorem}[section]
\newtheorem{lemma}[theorem]{Lemma}

\newtheorem{definition}[theorem]{Definition}

\newtheorem{commentary}[theorem]{Human Comment}

\hypersetup{
    colorlinks=true,
    linkcolor=blue,
    citecolor=blue,
    urlcolor=blue
}

\newcommand{\Aletheia}{\emph{Aletheia}}
\renewcommand{\P}{\mathbb{P}}
\newcommand{\Z}{\mathbb{Z}}
\newcommand{\V}{\mathscr{V}}
\newcommand{\M}{\mathscr{M}}

\newcommand{\Hodge}{\Omega}

\newcommand{\C}{\mathbb{C}}

\DeclareMathOperator{\id}{id}

\DeclareMathOperator{\tr}{tr}

\DeclareMathOperator{\Mod}{Mod}

\usepackage[utf8]{inputenc}
\usepackage[T1]{fontenc}
\usepackage{xcolor} 
\usepackage[most]{tcolorbox} 

\definecolor{humanbubble}{RGB}{235, 245, 255}
\definecolor{aibubble}{RGB}{245, 245, 245}    
\definecolor{loggray}{RGB}{230, 230, 230}
\definecolor{indexbg}{RGB}{248, 248, 248}

\newtcolorbox{interactionlog}[2][]{
  enhanced,
  arc=0pt, outer arc=0pt,
  colback=white, colframe=black!60,
  boxrule=0.8pt,
  fonttitle=\bfseries\sffamily, coltitle=black, colbacktitle=loggray,
  title={Human-AI Interaction Card \if\relax\detokenize{#1}\relax\else for #1\fi},
  halign title=center, attach title to upper,
  after title={\vspace{4pt}\hrule\vspace{10pt}},
  lower separated=true,
  segmentation style={solid, black!60, line width=0.8pt},
  colbacklower=indexbg,
  after upper={\par\vfill
    \begin{tcolorbox}[
      enhanced, colback=indexbg, colframe=white, boxrule=0pt,
      top=0pt, bottom=0pt, fontupper=\footnotesize\sffamily,
      title=Raw prompts and outputs, coltitle=black!70, attach title to upper,
      after title={:\enskip}, sharp corners
    ]
    #2
    \end{tcolorbox}
  }
}

\newcommand{\human}[1]{%
  \noindent\begin{flushright}
    \begin{minipage}[c]{0.70\textwidth}
      \begin{tcolorbox}[
        enhanced,
        colback=humanbubble, colframe=black!15,
        arc=6pt, sharp corners=southeast, boxrule=0.5pt,
        left=6pt, right=6pt, top=4pt, bottom=4pt, boxsep=0pt
      ]\small #1\end{tcolorbox}
    \end{minipage}%
    \hspace{8pt}
    \begin{minipage}[c]{40pt}
      \footnotesize\sffamily\textbf{Human}
    \end{minipage}
  \end{flushright}
}

\newcommand{\ai}[2]{%
  \noindent\begin{flushleft}
    \begin{minipage}[c]{45pt}
      \footnotesize\sffamily\textbf{#1}
    \end{minipage}%
    \hspace{2pt}
    \begin{minipage}[c]{0.70\textwidth}
      \begin{tcolorbox}[
        enhanced,
        colback=aibubble, colframe=black!15,
        arc=6pt, sharp corners=southwest, boxrule=0.5pt,
        left=6pt, right=6pt, top=4pt, bottom=4pt, boxsep=0pt
      ]\small #2\end{tcolorbox}
    \end{minipage}
  \end{flushleft}
}

\title{The simplicity of the Hodge Bundle}
\author{Anand Patel}
\address{Department of Mathematics, Oklahoma State University, Stillwater, OK 74074}
\email{anand.patel@okstate.edu}

\begin{document}

\maketitle

\section{Introduction}

The Hodge bundle $\Omega_{g}$ is the rank $g$ vector bundle
over the moduli stack of complex genus $g$ curves $\M_{g}$  which assigns to a
curve $C \in \M_{g}$ its
vector space $H^{0}(C,\omega_{C})$ of holomorphic differentials.  \footnote{We will assume
$g \geq 2$ throughout.} In this note,
we prove that $\Omega_{g}$ is simple, meaning:

\begin{theorem}
  \label{theorem:main} The Hodge bundle $\Omega_{g}$ does not contain any
  non-trivial sub-bundles.
\end{theorem}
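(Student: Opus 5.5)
The plan is to rule out a sub-bundle $E\subset\Omega_g$ of rank $0<r<g$ by checking its fibres at curves with many automorphisms. Because $E$ is a sub-bundle on the \emph{stack} $\M_g$, each fibre $E_C\subset H^0(C,\omega_C)$ must be stable under $\operatorname{Aut}(C)$. A single curve cannot force $E_C$ to be $0$ or everything, since $H^0(\omega_C)$ is rarely an irreducible $\operatorname{Aut}(C)$-representation. So I would play several such constraints off against each other using the continuity of $E$.

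\emph{Step 1: a special curve.} Take $C_0: y^2=x^{2g+1}-1$, with automorphism $\sigma(x,y)=(\zeta x,-y)$ of order $2(2g+1)$, where $\zeta$ is a primitive $(2g+1)$-th root of unity. The basis differentials $\omega_i=x^{i}\,dx/y$, $0\le i\le g-1$, are eigenvectors with pairwise distinct eigenvalues $-\zeta^{i+1}$. Hence $E_{C_0}$ is a coordinate subspace $\bigoplus_{i\in S}\C\,\omega_i$ for some $S\subset\{0,\dots,g-1\}$ with $|S|=r$. Because $r$ is constant on the connected stack $\M_g$, this fixes $|S|$ but not $S$.

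\emph{Step 2: the hyperelliptic locus and its monodromy.} On the hyperelliptic locus $\mathcal H_g\subset\M_g$ I would use Kodaira–Spencer rather than automorphisms alone. Write the deformations near $C_0$ as $y^2=f(x)$, with $f$ of degree $2g+1$ and coefficients varying. In this family the frame $x^i\,dx/y$ of $\Omega_g$ is defined uniformly, so $E$ is given by a holomorphic map to $\mathrm{Gr}(r,g)$. At every curve with an extra automorphism, this map must land in the corresponding invariant subspaces.
\begin{itemize}
\item The cyclic curves $y^2=x^{2g+1}-t$ all give the same coordinate subspace.
\item The curves $y^2=x(x^{2g}-1)$ and $y^2=x^{2g+2}-1$ carry automorphisms of order $4g$ and $2g+2$. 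These act diagonally in the bases $x^i\,dx/y$ as well, but with the roles of $\infty$ and the branch points changed.
\end{itemize}
Transporting the condition from $C_0$ to these curves along an explicit path of Möbius changes of coordinate in $x$ changes the basis by a unipotent (binomial) matrix. A coordinate subspace stays invariant under that change only if $S=\emptyset$ or $S=\{0,\dots,g-1\}$. Since $E$ is a sub-bundle over the whole connected locus, one invariant subspace must be carried to the other, so $r\in\{0,g\}$. In other words, $E|_{\mathcal H_g}$ is trivial.

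\emph{Step 3: leaving the hyperelliptic locus, and the main obstacle.} The hyperelliptic argument in Step 2 also controls $E$ on all of $\M_g$. The hard part is making the transport in Step 2 rigorous. A holomorphic map to $\mathrm{Gr}(r,g)$ that takes coordinate-subspace values at isolated special points is not obviously constant. I would therefore replace the ``pointwise'' argument by a global one on the stack $\mathcal H_g\cong[\text{binary forms}/GL_2]$. A sub-bundle of $\Omega_g|_{\mathcal H_g}$ is the same as a $GL_2$-equivariant sub-bundle of a trivial bundle over the open set of smooth binary forms. That trivial bundle is $\mathrm{Sym}^{g-1}(\C^2)$ twisted by a character. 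Equivariant sub-bundles over this $GL_2$-orbit-rich base are constrained by the isotropy at a point with trivial connected stabiliser. The key check is whether the orbit map has dense image in a $GL_2$-homogeneous piece. If it does, $E$ must be a $GL_2$-subrepresentation of the irreducible module $\mathrm{Sym}^{g-1}$, hence trivial. This irreducibility argument is the route I would pursue first. The worry is that the open set of smooth binary forms is not a single orbit. If that gap blocks the argument, I would fall back on the density of isotrivial cyclic families together with Step 1.

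Finally, for $g\ge3$, any $E\subset\Omega_g$ restricts to a sub-bundle of $\Omega_g|_{\mathcal H_g}$ of the same rank, because $\M_g$ is connected and $E$ is a sub-bundle on all of it. Step 2 then forces $r\in\{0,g\}$ on all of $\M_g$, which proves \Cref{theorem:main}. For $g=2$ we have $\M_2=\mathcal H_2$, so Step 2 alone suffices.
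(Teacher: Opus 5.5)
Your proposal has a genuine gap: Step 2, on which everything depends, does not work. Step 1 is correct, but it gives no constraint on $r$, because every $r$-element subset $S$ yields an admissible coordinate subspace. A sub-bundle $E$ is only a holomorphic (or algebraic) map to $\mathrm{Gr}(r,g)$ in the frame $x^i\,dx/y$. It is neither flat nor constant in that frame. So knowing that $E_{C_0}$ is a coordinate subspace in the eigenbasis at $C_0$, and an invariant subspace at $y^2=x(x^{2g}-1)$ or $y^2=x^{2g+2}-1$, gives no relation between these two values. There is nothing to ``transport.''

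The linear-algebra claim is also false. The binomial change of basis from $x\mapsto x+t$ is unipotent triangular, and it preserves every initial segment $\langle\omega_0,\dots,\omega_k\rangle$.

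The Step 3 repair fails for dimension reasons. Smooth binary forms of degree $2g+2$ form a space of dimension $2g+3$, while $GL_2$-orbits have dimension at most $4$. An equivariant map to a Grassmannian is therefore not determined by its value on one orbit, and nothing forces it to be a constant subrepresentation. The fallback also fails: the curves $y^2=x^{2g+1}-t$ are all isomorphic, a single point of moduli, so there is no density to exploit.

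Finally, reducing to $\mathcal H_g$ replaces the theorem by a strictly stronger statement, simplicity of $\Omega_g|_{\mathcal H_g}$, for which you give no argument. On $\mathcal H_g$ the paper's $(\Z/2\Z)^2$ relations are vacuous. For any extra involution $\tau$ one has $\operatorname{tr}\iota^*=-r$ and $\operatorname{tr}(\tau\iota)^*=-\operatorname{tr}\tau^*$, so the useful information must come from non-hyperelliptic curves.

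The missing idea is a \emph{discrete} invariant of $E$ that is locally constant along loci of curves with a given automorphism, so that information genuinely propagates between special curves. The paper uses the trace of an involution $\tau$ on the fibre $V_C$.
\begin{itemize}
\item Involutions with the same number $2m$ of fixed points are conjugate in $\Mod_g$, and the fixed locus $\mathscr T_g^\tau$ is connected. Hence the $\pm1$-eigenbundles of $\tau^*$ on $\V|_{\mathscr T_g^\tau}$ have constant rank, and $f(m)=\operatorname{tr}(\tau^*|_{V_C})$ depends only on $m$.
\item On curves with a $(\Z/2\Z)^2$-action and quotient $\P^1$, most of them non-hyperelliptic, there are no invariant differentials. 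This gives $r+f(m_1)+f(m_2)+f(m_3)=0$ for every admissible triple with $m_1+m_2+m_3=g+3$.
\item Solving these relations gives $f(m)=r(1-m)/g$.
\item Integrality of $\dim V^{+-}=\tfrac12\bigl(r+f(m_1)\bigr)$ for two suitable values of $m_1$ then forces $g\mid r$.
\end{itemize}
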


We will contextualize \Cref{theorem:main} after some
important acknowledgments.

\subsection{Acknowledgments and use of AI} The mathematical content in this paper came from
the output of \Aletheia, a custom AI agent (powered by Gemini Deep Think) built by Tony Feng \cite{feng2026towards}.  The author provided a
single prompt in which he asked for a proof of the stated theorem, and gave
no further hints. We provide the Human-AI interaction card (see \cite{feng2026towards}) below. As in
\cite{feng2026eigenweights},  the author made only expository adjustments, made
some proofs more readable, split others apart, and added some comments and questions
for further exploration.  In contrast, our subject matter here feels quite different and the prompt was not divided into multiple parts.  The author thanks Tony Feng for providing access to \Aletheia. 

It was Dawei Chen who first asked the author the
question about the simplicity of the Hodge bundle, after which point the author heard the
question several more times from other mathematicians circa 2015. Over the
years, the author has thought on and off about this question with friends and collaborators, particularly Anand Deopurkar, Eduard
Duryev, and Leonid Monin.  He thanks all of them for the fun conversations.  

The dual purpose of this note is to simultaneously address a curiosity among
members of the $\mathscr{M}_g$ community while also demonstrating the
capabilities (as of March 2026) of AI models when it comes to mathematical research.

\begin{interactionlog}[]{https://github.com/google-deepmind/superhuman/tree/main/aletheia}

    \human{Ask to prove simplicity of the Hodge bundle.}

    \ai{\Aletheia}{Fully correct solution exploiting curves with $(\mathbb{Z}/2\Z)^2$ symmetry group and a bit of  character theory.}

\end{interactionlog}

\subsection{Context and further directions}

\Cref{theorem:main} is another instance of the general 
philosophy that ``the universal object holds no surprises.'' It is important to note that the
analogous problem over the Deligne-Mumford compactification $\overline{\M}_{g}$,
where $\Omega_{g}$ is replaced by its usual extension (parameterizing sections of
the dualizing sheaf), is trivially easy:  One looks at a stable curve $C$ obtained by
attaching $g$ copies $E_{i}$ of the same elliptic curve $E$ to $\P^{1}$ at $g$ points $p_{1}, \dots, p_{g}$ on $\P^{1}$.
Then any sub-bundle must restrict to a direct sum of some set of
the $g$ natural lines $H^{0}(E_{i},\omega_{C}|_{E})$ in the space of differentials on this curve.  But these
lines are permuted transitively under monodromy when moving the $g$ points of
attachment, showing that the sub-bundle must have rank $g$.  This sort of argument also precludes the existence of sub-bundles of
the Hodge bundle over the space of principally polarized abelian varieties $\mathscr{A}_{g}$.  

The proof of \Cref{theorem:main} given here is similar in spirit
to the simple argument given above for $\overline{\M}_{g}$ and
$\mathscr{A}_{g}$ in that it exploits curves with non-trivial automorphisms.
The challenge lies in finding the exact constellation of curves to exploit and
how to exploit them.

There are more directions to go
beyond \Cref{theorem:main}.  For instance, if we let
$\Omega_{g}^{(n)}$ denote the vector bundle parameterizing sections of
$\omega_{C}^{\otimes n}$  (so that $\Omega_{g}^{(1)} = \Omega_{g}$) then one
could ask the same question as to whether any holomorphic sub-bundles exist.
This expanded setting brings with it some complications: For infinitely many values of
$n$ there do exist holomorphic line sub-bundles of $\Omega_{g}^{(n)}$ due to the existence of various types
of Weierstrass divisors. It is then natural to wonder, for instance, if Weierstrass divisors
account for all line sub-bundles. This would be a rigidity result much in the spirit of those discussed in   \cite{farb2023rigidity}.

\subsection{Conventions} We will work over the complex numbers $\C$. \footnote{Though after
making adjustments as in  \Cref{commentary:hurwitzspaces} one
should be able to prove the result in all characteristics outside $2$ almost
immediately from the strategy given. \Cref{theorem:main} should be true over any field.} The term \textit{curve} assumes 
irreducible, smooth, and projective by default, and if $C$ is a curve then
$\omega_{C}$ denotes its canonical line bundle.  If $G$ is a finite group acting
on a curve $C$ then $C/G$ denotes the classical quotient curve rather than the
quotient stack.

\section{The proof} 

From this point onward, we posit the existence of a sub-bundle which we denote by \[\V
\subset \Hodge_{g},\] and we let $r$ denote the rank of $\V$.  The first
observation we make is that if $C$ is any genus $g$ curve, and if $G$ is any group of
automorphisms of $C$, then the fiber \[V_{C} \subset H^{0}(C,\omega_{C})\] of $\V$ at $C \in \M_{g}$ is a
sub-representation of the $G$-representation $H^{0}(C,\omega_{C})$ given by
pullback of forms.

We will frequently refer to the set \[S := \{m \mid 0 \leq m \leq
g+1\,\,\textrm{and} \,\, m \equiv g+1 \pmod 2 \}.\]
The significance of the set $S$ is: a genus $g$ curve has an involution with
$2m$
fixed points if and only if $m \in S$.  

\subsection{Curves with involutions}

\begin{lemma}
  \label{corollary:independent} Let $\tau$ be an involution on a genus $g$ curve
  $C$ with $2m$ fixed points.  Then the trace
  of the restriction of $\tau^{*}$ on $V_{C}$ depends only on the integer $m$ and the bundle
  $\V$. 
\end{lemma}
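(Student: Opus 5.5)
The idea is to use the connectedness of the locus of pairs $(C,\tau)$ with fixed numerical data, together with the discreteness of traces of finite-order operators. Fix $m \in S$ and consider the Hurwitz-type moduli space $\mathcal{H}_{m}$ parametrizing pairs $(C,\tau)$, where $C$ is a genus $g$ curve and $\tau$ is an involution with exactly $2m$ fixed points. By Riemann--Hurwitz, the quotient $D = C/\langle\tau\rangle$ has genus $h$ with $2g-2 = 2(2h-2) + 2m$, so $h = (g+1-m)/2$ is determined by $m$. Such a pair is equivalent to the data of a genus $h$ curve $D$, a reduced branch divisor $B$ of degree $2m$ on $D$, and a line bundle $L$ with $L^{\otimes 2} \cong \mathcal{O}_D(B)$. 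When $m=0$ we also require $L \not\cong \mathcal{O}_D$, so that the cover is connected.

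The first step is to show that $\mathcal{H}_{m}$ is connected. For $m>0$, the square roots $L$ form a torsor under $\mathrm{Pic}^0(D)[2]$. Monodromy obtained by moving a branch point around loops in $D$ acts transitively on this torsor, and the base parametrizing $(D,B)$ is connected. Hence the total space is connected. For $m=0$, the space parametrizes pairs $(D,\eta)$ with $\eta$ a nonzero $2$-torsion point. Its connectedness follows from the transitivity of $\mathrm{Sp}_{2h}(\Z/2)$, which is the image of the mapping class group, on nonzero vectors of $H^1(D,\Z/2)$. Equivalently, one can quote the classical fact that the topological type of an involution on a surface is determined by its number of fixed points, apart from the orientation-reversing cases, which are irrelevant here. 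I expect this connectedness, especially the careful handling of the $m=0$ case, to be the main point requiring justification. I would cite the classical result of Nielsen/Dold rather than reprove it.

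The second step is to define the function $(C,\tau) \mapsto \tr(\tau^{*}|_{V_C})$ on $\mathcal{H}_m$. Over $\mathcal{H}_m$ there is a universal family $\pi\colon \mathcal{C}\to\mathcal{H}_m$ with a fiberwise involution, possibly after passing to an étale cover or working on the stack. The pullback of $\Hodge_g$ along the forgetful map $\mathcal{H}_m \to \M_g$ is $\pi_*\omega$, and it carries a holomorphic endomorphism $\tau^*$ of order $2$. The pullback of $\V$ is a holomorphic sub-bundle. It is preserved by $\tau^*$, since on each fiber the sub-bundle is determined by the underlying curve and is therefore stable under automorphisms, as observed at the start of this section.

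So $\tau^*|_{\V}$ is a holomorphic bundle endomorphism with $(\tau^*)^2=\id$. Its trace is a holomorphic, and in particular continuous, function. At every point this trace equals the difference between the multiplicities of the eigenvalues $+1$ and $-1$, so it is an integer in $[-r,r]$. A continuous integer-valued function on a connected space is constant. Its value therefore depends only on $m$ and on $\V$. Finally, every $(C,\tau)$ in the statement appears as a point of $\mathcal{H}_m$, which completes the argument.
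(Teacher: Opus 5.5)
Your proof is correct, but it establishes the key connectedness statement by a different route from the paper.

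\textbf{The paper's route.} It works on Teichm\"uller space. The topological classification of involutions makes any two involutions with $2m$ fixed points conjugate in $\Mod_g$. Nielsen realization then identifies the fixed locus $\mathscr{T}_g^{\tau}$ with the Teichm\"uller space of the quotient orbifold. That space is a cell, so the $\pm1$ eigenbundles of $\V$ have constant rank on it.

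\textbf{Your route.} You prove directly that the Hurwitz space $\mathcal{H}_m$ of pairs $(C,\tau)$ is connected, by monodromy on the data $(D,B,L)$. For $m>0$, branch-point monodromy acts transitively on the square roots of $\mathcal{O}_D(B)$. For $m=0$, $\mathrm{Sp}_{2h}(\Z/2)$ acts transitively on nonzero classes. This is exactly the alternative the author sketches in Human Comment~\ref{commentary:hurwitzspaces}. The final step, that a continuous integer-valued trace on a connected space is constant, is the same in both proofs.

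\textbf{Comparison.} Your approach avoids Teichm\"uller theory and Kerckhoff's theorem. It also has the shape (irreducibility of a Hurwitz stack) that transfers to characteristic $\neq 2$, although your monodromy arguments as written are topological. The paper's approach is shorter once the big theorems are granted. Both use only the continuity of $\V$.

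\textbf{Two small remarks.}
\begin{enumerate}
\item The transitivity claim for $m>0$ deserves its one-line proof. Write $B=b+B'$. As $b$ moves, the square roots of $\mathcal{O}_D(b+B')$ form the pullback of $[2]\colon J\to J$ along a translate of the Abel--Jacobi map. Its monodromy $\pi_1(D)\to H_1(D,\Z)\to J[2]$ is surjective.
\item Your ``equivalently'' shortcut via the classical conjugacy of involutions does not by itself give connectedness of $\mathcal{H}_m$. You also need that the locus of complex structures compatible with a fixed involution is connected. That is precisely the extra input the paper supplies through $\mathscr{T}_g^{\tau}$.
\end{enumerate}
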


\begin{proof}
By the topological classification of surfaces, two orientation-preserving
involutions on a closed orientable surface $\Sigma_g$ with the same number of
fixed points correspond to double covers over a quotient surface of a uniquely
determined genus.  Thus, such involutions are conjugate in the mapping class
group $\Mod_{g}$.  The putative bundle $\V$ corresponds to a
$\Mod_g$-equivariant bundle over the Teichm\" uller space $\mathscr{T}_{g}$,
which we continue to denote by $\V$.  

By the Nielson Realization Theorem \cite{kerckhoff1983nielsen} the fixed point locus
$\mathscr{T}_{g}^{\tau}$ of $\tau$ is non-empty and naturally identifies with
the quotient orbifold $\Sigma_g / \langle\tau \rangle$.  Since the Teichm\"
uller space of any surface orbifold is homeomorphic to a Euclidean space,
$\mathscr{T}_{g}^\tau$ is a connected manifold.  The restriction of $\V$ to
$\mathscr{T}_{g}^{\tau}$ globally splits into $+1$ and $-1$ eigen-bundles for
the action of $\tau^{*}$, each having a constant rank by connectedness.  The trace of $\tau^{*}$ on $V_C$ is then
the difference between the ranks of these bundles, proving the lemma.

\end{proof} 

\begin{commentary}
  \label{commentary:hurwitzspaces} Aletheia decided to go the topological route
  with \Cref{corollary:independent}, presumably because the author
phrased the initial question over the complex numbers.  But this argument can be
adapted to be purely algebraic by appealing to the irreducibility of the Hurwitz
stack representing double covers $C \to D$ where $C$ is a genus $g$ curve and
where the double cover has $2m$ branch points. Such an approach would also avoid the use of
the rather heavy Nielson Realization Theorem. Irreducibility of these stacks is proven
using standard Hurwitz monodromy arguments, and is valid over any field of
characteristic other than $2$.  

One advantage of the topological approach: it shows that $\Omega_{g}$ does
not contain any \textit{continuous} sub-bundles either.  This suggests there might
also be cohomological arguments for \Cref{theorem:main}. 
\end{commentary}

\Cref{corollary:independent} paves the way for the next
definition.

\begin{definition}
  \label{def:functionf} For each $m \in S$, we let $f(m)$ denote the trace
  $\tau^{*}$ on $V_{C}$ for any curve $C$ admitting an involution $\tau$ with
  $m$ fixed points.
\end{definition}

\subsection{Some curves with two commuting involutions} The curves we will consider are certain
curves with a $(\Z/2\Z)^{2}$ symmetry group whose quotient yields $\P^{1}$.  Let $C$ be
such a curve. Then it has three commuting involutions $\tau_1, \tau_2, \tau_3$
satisfying $\tau_1 \tau_2 = \tau_3$; 
let $2m_1, 2m_2, 2m_3$ denote the corresponding numbers of fixed points. We
assume these fixed point sets are pair-wise disjoint. 
 
We will first determine which triples of integers $(m_1, m_2, m_3) \in S^{3}$ can arise in
this way. We let \[T \subset S\times S \times S\] denote the set of such triples.  The answer is straightforward.   

\begin{lemma}
  \label{lemma:triples} A triple $(m_1, m_2, m_3)$ lies in $T$ if and only if at
  most one of the $m_i$ is $0$ and $m_1 + m_2 + m_3 = g+3$. 
\end{lemma}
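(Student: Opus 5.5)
Throughout, $C$ is a genus $g$ curve with $G=\{1,\tau_1,\tau_2,\tau_3\}\cong(\Z/2\Z)^2$ acting, $C/G\cong\P^1$, and the fixed point sets of the $\tau_i$ pairwise disjoint. I will prove both directions with Riemann--Hurwitz and the standard description of $(\Z/2\Z)^2$-covers of $\P^1$ by branch data.

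\emph{Necessity.} A point of $C$ with nontrivial stabilizer has cyclic stabilizer, since stabilizers of smooth points are cyclic; so it is $\langle\tau_i\rangle$ for exactly one $i$. The $2m_i$ fixed points of $\tau_i$ form $m_i$ free $G/\langle\tau_i\rangle$-orbits of size $2$. Hence the branch locus of $C\to\P^1$ has $m_1+m_2+m_3$ points, each with ramification index $2$ and fibre of size $2$. Riemann--Hurwitz for the degree $4$ map gives
\[2g-2=4(-2)+\textstyle\sum_i 2m_i,\]
so $m_1+m_2+m_3=g+3$. For the condition on zeros, note that $\pi_1(\P^1\setminus B)$ surjects onto $G$. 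The local monodromy at a branch point of type $i$ is $\tau_i$, and the product of all the local monodromies is trivial. If two of the $m_i$ vanished, every local monodromy would be the same $\tau_j$, which cannot generate $G$. So at most one $m_i$ is $0$.

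\emph{Sufficiency.} Take integers $m_i\ge 0$, at most one of them zero, with $\sum m_i=g+3$ and each $m_i\equiv g+1\pmod 2$.
\begin{enumerate}
\item Choose $m_1+m_2+m_3$ distinct points of $\P^1$, with $m_i$ of them assigned local monodromy $\tau_i$.
\item The product condition in the abelian group $G$ asks that $\tau_1^{m_1}\tau_2^{m_2}\tau_3^{m_3}=1$. Writing $\tau_3=\tau_1\tau_2$, this becomes $m_1+m_3\equiv 0$ and $m_2+m_3\equiv 0\pmod 2$. Both hold because all $m_i$ have the same parity.
\item Generation holds because at least two distinct $\tau_i$ occur, and any two of them generate $G$.
\item Riemann existence then yields a connected $G$-cover $C\to\P^1$. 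The fixed locus of $\tau_i$ is exactly the preimage of the type-$i$ branch points, so these loci are pairwise disjoint and $\tau_i$ has $2m_i$ fixed points.
\item By the Riemann--Hurwitz computation above, $C$ has genus $g$. The constraint $m_i\le g+1$ defining $S$ is then automatic, since the other two $m_j$ are nonnegative and not both zero, which gives $m_i\le g+1$.
\end{enumerate}
Alternatively, one can write $C$ explicitly as the fibre product of the double covers $y^2=f_1f_3$ and $z^2=f_2f_3$, where $f_i$ has $m_i$ roots.

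\emph{Main obstacle.} The delicate point is the parity bookkeeping. The set $T$ is defined inside $S^3$, so the common parity $m_i\equiv g+1$ is built in. This is exactly what makes the monodromy product condition hold in the sufficiency direction, and the write-up must use it explicitly. The case of a free involution, $m_i=0$, needs separate care: one must check that connectedness still holds, which is the generation argument in step 3.
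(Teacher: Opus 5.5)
Your proof is correct, but your sufficiency direction takes a different route from the paper's. The paper builds the curve explicitly as the smooth model of $\C(x)(y,z)$ with $y^2=AC$, $z^2=BC$, for generic coprime squarefree $A,B,C$ of degrees $m_1,m_2,m_3$. It then checks by local valuation computations three things: $\infty$ is unbranched, each root of $A$, $B$, $C$ has ramification profile $(2,2)$, and $\tau_i$ fixes exactly the $2m_i$ points over the roots of the corresponding polynomial.

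You instead prescribe the branch data and invoke Riemann existence. This makes the role of each hypothesis clear. The parity hypothesis is exactly the product-one condition $\tau_1^{m_1}\tau_2^{m_2}\tau_3^{m_3}=1$, and connectedness is the generation condition. The fixed-point counts and the disjointness of the fixed loci then follow directly from the local-monodromy and stabilizer bookkeeping. The paper's construction needs more hand-checking, but it is elementary, purely algebraic and gives equations. That suits the paper's remark that the argument should work in any characteristic $\neq 2$, whereas your route relies on the Riemann existence theorem.

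In the necessity direction you agree with the paper on the Riemann--Hurwitz count. You also observe that cyclic stabilizers force the fixed loci to be disjoint, which the paper instead assumes. Your monodromy argument that at most one $m_i$ vanishes is more accurate than the paper's one-liner. If $m_1=m_2=0$, the map $C\to\P^1$ is still branched over the $m_3$ type-$3$ points. What is actually unramified is the intermediate double cover $C/\langle\tau_3\rangle\to\P^1$, which is what your statement that the image of $\pi_1$ lies in $\langle\tau_3\rangle$ captures. Alternatively, $m_3=g+3>g+1$ contradicts $m_3\in S$.

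One small slip is in your step 5: ``not both zero'' alone only gives $m_i\le g+2$. To get $m_i\le g+1$ you also need either that $m_j+m_k$ is even (from the common parity), or the Riemann--Hurwitz bound $2m_i\le 2g+2$ for an involution of a genus-$g$ curve.
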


\begin{proof}
  Given a triple $(m_1,m_2,m_3) \in T$, we choose generic, mutually coprime,
  square-free polynomials $A,B,C \in \C[x]$ of degrees $m_1, m_2, m_3$,
  respectively. Let $K = \C(x)$.  We define a smooth projective curve $C$ as the
  smooth model of the function field $L = K(y,z)$ where $y^{2} = AC$ and $z^{2}
  = BC$.  Because at most one $m_i = 0$, the polynomials $AC$ and $BC$ cannot
  both be constants; thus the extension $L/K$ forms a fully generated degree $4$
  Galois extension with Galois group $G \cong (\Z/2\Z)^{2}$, guaranteeing $C$ is
  a connected curve.  Since $m_i \equiv g+1 \pmod 2$, the polynomials $AC$ and
  $BC$ have even degrees, implying that $L/K$ is unramified over $x = \infty \in
  \P^{1}$. 

  By construction, the ramification of the cover $C \to \P^{1}$ lies entirely
  over the roots of $A,B$ and $C$.  Over a root $x=p$ of $A$ or $B$, the ramification
  profile is of type $(2,2)$ -- that is, there are two points lying over $p$
  with simple ramification occurring at each point. 

  Over a root $x=c$ of $C(x)$, the quadratic subextension $K(y/z)/K$ evaluates locally to
  $(y/z)^{2} = A(c)/B(c) \neq 0$ and therefore by adjoining $y$ or $z$ we again
  obtain two points of simple ramification over $q$.  

  By the Riemann-Hurwitz formula applied to $L/K$, the genus $g_C$ satisfies: 
  \[2g_{C}-2 = 4(-2) + 2m_1 + 2m_2 + 2 m_3 = -8 + 2(g+3) = 2g-2,\] which
  confirms $g_{C}=g$. The automorphisms $\tau_1, \tau_2$ acting by $(y,z) \mapsto (-y,z)$
 and $(y,z) \mapsto (y,-z)$, respectively, generate the Galois group $G$. Because $L
 = K(z)(y)$ and $y^{2} = AC$, ramification in $C \to C_{2}$ occurs exactly where
 the local valuation of $AC$ in $K(z)$ is odd.  Over a root $x=a$ of $A$, $AC$ has a
 simple zero, and since $C_{2} \to \P^{1}$ is unramified there the valuation
 remains $1$ at the two points lying in $C_{2}$ over $a$.  This gives
 $2m_1$ fixed points so far for $\tau_1$.  

 At the unique point $t \in C_{2}$ lying over a root $x = c$ of $C(x)$, the local function $z$ is a local coordinate. 
 Thus $AC$ vanishes to order $2$ at $t$. Over $t$ the double cover $C \to C_2$ is unramified, and so $\tau_{1}$ swaps the two 
 resulting pre-image points of $t$ in $C$.   Therefore, the fixed point set of $\tau_1$ consists of $2m_1$ points exactly. 

 Similar analyses for $\tau_2$ and $\tau_3 = \tau_1 \tau_{2}$, yield the conclusion that the number of fixed points
 of $\tau_i$ is $2m_i$.

 Conversely, if $C$ is a genus $g$ curve with commuting involutions $\tau_1, \tau_2,
 \tau_3 = \tau_1 \tau_2$ with disjoint fixed point sets of size $2m_{i}$
 respectively, and such that the quotient $C/\langle \tau_1, \tau_2 \rangle
 \simeq \P^{1}$, then the Riemann-Hurwitz analysis performed earlier for the
 cover $C \to \P^{1}$ shows that
 $m_1 + m_2 + m_3 = g+3$ holds. Furthermore, each $m_{i} \equiv g+1 \pmod 2$ by
 applying Riemann-Hurwitz to the quotients $C \to C /\tau_i$. Two of the $m_i$, say $m_1, m_2$ cannot simultaneously be $0$ since then $C \to
 \P^{1}$ would be a degree $4$ connected unramified
 cover of $\P^{1}$, which is impossible. The lemma is shown. 
\end{proof}

We now fix any curve $C$ which has a group of automorphisms $G \simeq
(\Z/2Z)^{2}$ with quotient $C/G \simeq \P^{1}$ as above. We
let $V^{++}, V^{+-}, V^{-+}, V^{--}$ denote the sub-representations of $V_{C}$ on
which $\tau_{1},\tau_{2}$ act via $+1$ or $-1$ as indicated in the superscripts. The subspace $V^{++}$ is the space  
of $G$-invariant differentials, which are exactly the pullbacks of differentials on $\P^{1}$. Since
$\P^{1}$ has no non-zero global $1$-forms, it follows that $V^{++} = 0$.

We'll need a simple fact about finite group representations.
\begin{lemma}
  \label{lemma:character} Let $G$ be a finite group acting on a
  finite-dimensional vector space $V$, and let $\chi$ denote a character of $G$.
  Let $V^{\chi}$ denote the sub-representation of $V$ on which $G$ acts via
 scaling by $\chi$.  Then 
 \[\dim V^{\chi} = \frac{1}{|G|}\sum_{g}\chi(g)^{-1}\tr(g).\]
\end{lemma}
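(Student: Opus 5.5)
The plan is to use the standard projection operator onto the $\chi$-isotypic component. Here $\chi$ is a one-dimensional character $G \to \C^{*}$. For such a character the formula $\dim V^{\chi} = \frac{1}{|G|}\sum_{g}\chi(g)^{-1}\tr(g)$ is meaningful, and in our application $G=(\Z/2\Z)^2$, so every irreducible character is of this form and takes values $\pm 1$.

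Define the linear endomorphism
\[P = \frac{1}{|G|}\sum_{g \in G} \chi(g)^{-1}\, g \colon V \to V.\]
The argument has three steps.

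First, show that $P$ lands in $V^{\chi}$. For $h \in G$, reindex the sum by $g' = hg$ and use multiplicativity $\chi(h^{-1}g')=\chi(h)^{-1}\chi(g')$. This gives
\[hP = \frac{1}{|G|}\sum_{g'}\chi(h^{-1}g')^{-1} g' = \chi(h) P.\]
So the image of $P$ lies in $V^{\chi}$.

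Second, show that $P$ is the identity on $V^{\chi}$. If $v \in V^{\chi}$, then $gv=\chi(g)v$ for every $g$, so each summand equals $v$ and $Pv = v$. Combining this with the first step, $P^{2}=P$, and $P$ is a projection with image exactly $V^{\chi}$.

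Third, take traces. The trace of a projection equals the dimension of its image, since it is diagonalizable with eigenvalues $0$ and $1$. Linearity of trace then gives
\[\dim V^{\chi} = \tr P = \frac{1}{|G|}\sum_{g}\chi(g)^{-1}\tr(g).\]

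None of these steps is a real obstacle. The only point needing care is that "character" must mean a homomorphism to $\C^{*}$, not the trace of a higher-dimensional irreducible representation. Multiplicativity is exactly what the reindexing in the first step uses, and the statement implicitly assumes this since $V^{\chi}$ is defined by scaling. Alternatively, one could cite the orthogonality relations: $\dim V^{\chi}$ is the multiplicity of $\chi$ in $V$, which equals $\langle \chi_V, \chi\rangle$. The projection argument is self-contained, so I would present that.
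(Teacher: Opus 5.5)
Your proposal is correct and matches the paper's proof: the paper uses the same averaging operator $F=\frac{1}{|G|}\sum_{g}\chi(g)^{-1}g$, checks that it fixes $V^{\chi}$ and has image in $V^{\chi}$, and then takes traces. Your identity $hP=\chi(h)P$ is the correct form of the paper's displayed relation, which has a typo and should read $h(F(w))=\chi(h)F(w)$.
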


\begin{proof}
  Consider the linear operator $F: V \to V$ given by $\frac{1}{|G|}\sum_{g \in
  G}\chi(g)^{-1}g$. Then it follows immediately that $F(v) = v$ for any $v \in V^{\chi}$,
   and also that for any $w \in V$ and $h \in G$, \[h(F(w)) = \chi(h)w.\] Therefore, $F$ is a projector with image $V^{\chi}$. 
  Since the trace of a projector is the dimension of its image, we conclude that \[\dim V^{\chi} = \frac{1}{|G|}\sum_{g \in G}\chi(g)^{-1}\tr{g}\] as claimed.
\end{proof}

\begin{commentary}
  \label{commentary:finiterep} The human author added this proof of the rather
  standard \Cref{lemma:character} for sake of completeness.  It
  seems that Aletheia wrongly assumed this human would just know it. 
\end{commentary}

\begin{lemma}
  \label{lemma:negativer} For any $(m_1,m_2,m_3) \in T$, we have 
  \[r + f(m_1) + f(m_2) + f(m_3) = 0,\] and 
  \[r + f(m_1) - f(m_2) - f(m_3) = 4\dim V^{+-}.\]
\end{lemma}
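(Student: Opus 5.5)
Both identities will come from \Cref{lemma:character} applied to the group $G=\langle\tau_1,\tau_2\rangle\simeq(\Z/2\Z)^2$ acting on $V_C$, for a curve $C$ realizing the triple $(m_1,m_2,m_3)$. Such a curve exists by \Cref{lemma:triples}, since the triple lies in $T$. The whole computation rests on first identifying the traces of the four group elements.

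\textbf{Identifying the traces.} The identity acts with trace $r=\dim V_C$. For $i=1,2,3$, the involution $\tau_i$ is an involution of a genus $g$ curve with $2m_i$ fixed points. By \Cref{corollary:independent} and \Cref{def:functionf}, its trace on $V_C$ is $f(m_i)$; this uses that the fiber $V_C$ is $G$-stable, as observed at the start of the section. One bookkeeping point needs care. The definition of $f$ is phrased with ``$m$ fixed points,'' whereas the lemma before it uses $2m$ fixed points. I will read $f(m_i)$ as the trace for an involution with $2m_i$ fixed points, consistent with \Cref{corollary:independent}.

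\textbf{First identity.} Take $\chi$ to be the trivial character. \Cref{lemma:character} gives
\[\dim V^{++}=\tfrac14\bigl(r+f(m_1)+f(m_2)+f(m_3)\bigr).\]
We showed above that $V^{++}=0$, because $G$-invariant differentials descend to $C/G\simeq\P^1$. Hence the sum vanishes, which is the first identity.

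\textbf{Second identity.} Take $\chi$ to be the character with $\chi(\tau_1)=1$ and $\chi(\tau_2)=-1$. Then $\chi(\tau_3)=\chi(\tau_1)\chi(\tau_2)=-1$. Every value of $\chi$ is $\pm1$, so $\chi^{-1}=\chi$. The eigenspace $V^{\chi}$ is exactly $V^{+-}$, and \Cref{lemma:character} gives
\[4\dim V^{+-}=r+f(m_1)-f(m_2)-f(m_3).\]

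\textbf{Main obstacle.} No step is genuinely hard. The only point needing care is matching each $\tau_i$ to the correct fixed-point count $2m_i$, so that $f(m_i)$ is well defined; this requires the fixed point sets to be as computed in \Cref{lemma:triples}. Checking this, together with the sign conventions in $\chi$, is the step I would verify most carefully.
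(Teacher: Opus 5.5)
Your proof is correct and follows the paper's argument: apply \Cref{lemma:character} to $G=\langle\tau_1,\tau_2\rangle$ acting on $V_C$, identify $\tr(\tau_i^{*})=f(m_i)$ via \Cref{corollary:independent}, and use $V^{++}=0$. The paper's one-line proof names only the trivial character, so your explicit use of the character with $\chi(\tau_1)=1$, $\chi(\tau_2)=-1$ for the second identity, and your reading of \Cref{def:functionf} with $2m$ fixed points, correctly supply the omitted details.
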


\begin{commentary}
  \label{commentary:droppedfour} In a previous version of this article, the human author, in the process of re-writing the raw output, accidentally dropped the $4$ in front of $\dim V^{+-}$. The rest of the proof is not affected.
\end{commentary}

\begin{proof}
  We apply \Cref{lemma:character} to the identity character,
  in combination with the fact that $V^{++} = 0$ (no invariant differentials)
  and \Cref{def:functionf} to conclude both equations.
\end{proof}

\begin{lemma}
  \label{lemma:f(m)} For each $m \in S$ the integer $f(m)$ (see
  \Cref{def:functionf}) is equal to $\frac{r(1-m)}{g}.$
\end{lemma}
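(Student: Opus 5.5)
The plan is to read the first identity of \Cref{lemma:negativer} as a functional equation for $f$ on $S$: for every triple $(m_1,m_2,m_3)\in T$ we have $f(m_1)+f(m_2)+f(m_3) = -r$. By \Cref{lemma:triples}, $T$ consists of all triples in $S^3$ with $m_1+m_2+m_3 = g+3$ and at most one $m_i=0$. We first show that $f$ is affine in $m$ on $S$. Then we pin down the two constants.

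\emph{Step 1 (affineness).} Fix $m, m' \in S$ with $m' = m+2$. Suppose we can find $n, n' \in S$ with $n' = n-2$ such that both $(m,n,k)$ and $(m',n',k)$ lie in $T$ for a common third entry $k$. Subtracting the two equations gives
\[f(m+2)-f(m) = f(n)-f(n-2).\]
So the increments $\Delta(m) := f(m+2)-f(m)$ satisfy $\Delta(m)=\Delta(n-2)$ whenever the required triples exist. Varying $k$ and the free parameters, this links all the increments together, so $\Delta$ is a constant $d$. Hence $f(m)=a+bm$ on $S$, with $b=d/2$.

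The main obstacle is the bookkeeping: we must check, using only the constraints of \Cref{lemma:triples} (parity, range $0\le m_i\le g+1$, sum $g+3$, at most one zero), that enough triples exist to connect every pair of consecutive increments. This is where small $g$ or edge values $m=0$ or $m=g+1$ might cause trouble. My first attempt is to take $k$ as large as possible, $k = g+1$ or $g-1$ depending on which lies in $S$. The remaining two entries then sum to $2$ or $4$, which gives few options. So I would instead take $k$ to be a middle value and sweep $m$ across $S$, checking the endpoints separately. I would also use the alternative trick of permuting triples: $(m_1,m_2,m_3)$ and $(m_1+2,m_2-2,m_3)$, both with entries in range.

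\emph{Step 2 (constants).} With $f(m)=a+bm$, every triple in $T$ gives $3a + b(g+3) = -r$. A second relation is needed to find $a$ and $b$ separately, and it should come from an involution whose trace we know directly. The fixed-point-free case, available when $g$ is odd ($m=0 \in S$), is not immediately explicit. The hyperelliptic involution, however, has $2g+2$ fixed points, so $m=g+1$, and it acts by $-1$ on all of $H^0(C,\omega_C)$. Therefore $f(g+1) = -r$. Note that the trace-$m$ convention of \Cref{def:functionf} refers to $2m$ fixed points, consistent with \Cref{corollary:independent}.

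Combining $a + b(g+1) = -r$ with $3a + b(g+3) = -r$ gives $2a - 2bg = 0$, so $a = bg$. Substituting, $b(2g+1) = -r$... this conflicts with the target. Let me recheck: the target is $f(m) = r(1-m)/g$, which has $a=r/g$, $b=-r/g$. Then $3a+b(g+3) = (3r - r(g+3))/g = -r$ ✓, and $f(g+1) = -r$ ✓. So $a=-b$, not $a=bg$. Redoing the algebra: subtracting $3\times$ the first equation from the second gives $b(g+3) - 3b(g+1) = -r + 3r$, so $-2bg = 2r$ and $b=-r/g$, $a = -r - b(g+1) = r/g$. This yields $f(m) = r(1-m)/g$ as claimed.
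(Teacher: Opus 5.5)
Your overall route is the paper's. The paper sets $y(m)=f(m)-\frac{r(1-m)}{g}$, notes that $y$ satisfies $y(m_1)+y(m_2)+y(m_3)=0$ on $T$ with $y(g+1)=0$, and shows $y$ is an arithmetic progression. Your plan (prove $f$ affine, then solve for $a,b$ from one triple relation and $f(g+1)=-r$) is the same argument organized differently, and your corrected Step 2 algebra is right.

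The gap is Step 1, which is where essentially all the content of the lemma lies. You assert that varying $k$ ``links all the increments together'' but never exhibit the triples. The one concrete plan you propose would not suffice. With the third entry $k$ held fixed, your identity reads $\Delta(m)=\Delta(g+1-k-m)$, which is a reflection of the index set. Sweeping $m$ with a single middle $k$ only pairs increments symmetrically, and non-constant symmetric $\Delta$ satisfy all those relations. Composing reflections for different $k$ does give translations, but then you must check that every intermediate triple stays in range and has at most one zero entry. That is exactly the bookkeeping you deferred.

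The fix, which is what the paper does, is to pin the moving entry at the bottom of $S$ and let the third entry run.
\begin{itemize}
\item For $g$ odd, $(0,2k+2,g+1-2k)$ and $(2,2k,g+1-2k)$ lie in $T$ for $0\le k\le \frac{g-1}{2}$. The third entry is then at least $2$, so only one entry vanishes. Subtracting the two relations gives $f(2k+2)-f(2k)=f(2)-f(0)$ for every consecutive pair in $S$, including the last one, $f(g+1)-f(g-1)$.
\item For $g$ even, $(1,2j+3,g-2j-1)$ and $(3,2j+1,g-2j-1)$ lie in $T$ for $0\le j\le\frac{g-2}{2}$, since all entries are odd and hence nonzero. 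This gives $f(2j+3)-f(2j+1)=f(3)-f(1)$ for every consecutive pair up to $f(g+1)-f(g-1)$.
\end{itemize}
Once affineness on all of $S$ is established this way, your Step 2 finishes the proof.
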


\begin{proof}
  Let $y(m) = f(m) - \frac{r(1-m)}{g}$; then by definition of the set $T$ and by
  \Cref{lemma:negativer} it follows that 

  \begin{equation}
    \label{eq:sumy}
    y(m_1) + y(m_2) + y(m_3)
  = 0
\end{equation} for any $(m_1, m_2, m_3) \in T$.  Clearly, it suffices to show that $y(m) = 0$ for all $m \in S$. 

  In the most extreme case that $m = g+1$, any involution with $2m = 2g+2$ fixed points must be
  hyperelliptic, and therefore acts by $-\id$ on the space of differentials.
  Hence, $f(g+1) = -r$ and so \[y(g+1) = 0.\]

  Now there are two slightly different cases depending on the parity of $g$.  

  \textbf{If $g$ is even}: Then the set $S$ consists of the odd integers
  $2j+1$ for all $0 \leq j \leq g/2$.  The element $(1, g+1, 1) \in
  T$ yields the relation \[2y(1) + y(g+1) = 0,\] and since we already know $y(g+1)=0$, we conclude $y(1) = 0$. 
  Next, by considering the two elements $(1,2j+3,g-2j-1)$ and $(3,2j+1,g-2j-1)$
  in $T$, we conclude by subtracting corresponding relations \eqref{eq:sumy}
  that the integers $y(2j+1)$ form an arithmetic progression.  But since $y(1) =
  y(g+1) = 0$, we conclude $y \equiv 0$ as claimed. 

  \textbf{If $g$ is odd.} The set $S$ consists of the even integers $2k$ for $0
  \leq k \leq (g+1)/2$.  By considering the elements $(0,2k+2,g+1-2k)$ and
  $(2,2k,g+1-2k)$ of $T$, we again conclude that the integers $y(2k)$ are in
  arithmetic progression, satisfying \[y(2k+2) - y(2k) = y(2) - y(0).\]  By
  considering the relation \eqref{eq:sumy} (in conjunction with
  the fact that $y(g+1) = 0$)  for the triple
  $(0,2,g+1) \in T$, we get $y(0) + y(2) = 0$. Combining yields \[y(2k) =
  y(0)(1-2k)\] for all $0 \leq k \leq (g+1)/2$. Since $g \geq 3$, by
  plugging in $k = (g+1)/2$ we conclude that $y(0)=0$, and so $y(2k)=0$ for all $k$ as
  promised.  
\end{proof}

\subsection{Proof}

\begin{proof} [Proof of \Cref{theorem:main}]
  \label{proof:main} 

  It suffices to prove that the rank $r$ of the sub-bundle $\V \subset
  \Hodge_{g}$ satisfies the congruence $r \equiv 0 \pmod g$. 

  Again we let $C$ be a curve with two commuting involutions
  $\tau_1, \tau_2$, with $\tau_3 := \tau_1 \tau_2$ and with $2m_1, 2m_2, 2m_3$ fixed
  points respectively.  Combining both equations of
  \Cref{lemma:negativer}, we obtain 
  \[\dim V^{+-} = \frac{1}{2}(r + f(m_1))\] and so $r+f(m_1)$ is an even
  integer for any $m_1 \in S$ which can be completed to a triple in $T$. \footnote{This is where \Cref{commentary:droppedfour} is important.}  By
  \Cref{lemma:f(m)} \textit{we conclude that $r + \frac{r(1-m_1)}{g}$ must
  be an even integer for all such $m_{1} \in S$.}

  \textbf{If $g$ is even}: Then both $1$ and $3$ belong to $S$ and can be
  extended to triples $(1,1,g+1)$ and $(3,1,g-1)$ in $T$. Considering these, we
  conclude that $r$ must be even and that $r- \frac{2r}{g}$ must also be even.
  Subtracting gives the conclusion that $r \equiv 0 \pmod g$. 

  \textbf{If $g$ is odd.} Then both $0$ and $2$ belong in $S$ and extend to
  triples $(0,2,g+1)$ and $(2,2,g-1)$ in $T$. Therefore, the italicized
  observation above yields that both $r + \frac{r}{g}$ and $r -\frac{r}{g}$ are
  even integers.  As before, subtracting implies $r \equiv 0 \pmod g$, concluding
  the proof of \Cref{theorem:main}.
\end{proof}

\bibliographystyle{alpha}
\bibliography{simplehodge}

@article{feng2026eigenweights,
  title = {{E}igenweights for arithmetic {H}irzebruch {P}roportionality},
  author = {Feng, Tony},
  journal = {arXiv preprint arXiv:2601.23245},
  year = {2026},
}

@article{kerckhoff1983nielsen,
  title = {The {N}ielsen realization problem},
  author = {Kerckhoff, Steven P},
  journal = {Annals of mathematics},
  volume = {117},
  number = {2},
  pages = {235--265},
  year = {1983},
  publisher = {JSTOR},
}

@article{feng2026towards,
  title = {Towards autonomous mathematics research},
  author = {Feng, Tony and Trinh, Trieu H and Bingham, Garrett and Hwang, Dawsen
            and Chervonyi, Yuri and Jung, Junehyuk and Lee, Joonkyung and Pagano,
            Carlo and Kim, Sang-hyun and Pasqualotto, Federico and others},
  journal = {arXiv preprint arXiv:2602.10177},
  year = {2026},
}

@article{farb2023rigidity,
  title={Rigidity of moduli spaces and algebro-geometric constructions},
  author={Farb, Benson},
  journal={arXiv preprint arXiv:2302.06369},
  year={2023}
}
\end{document}